\newcommand{\dwsquare}{\begin{tikzpicture}
\def\x{.5}
\def\a{35}
\draw[line width=.9mm] (-\x,-\x)to[out=\a,in=180-\a](\x,-\x);
\draw[line width=.9mm](\x,-\x)to[out=90+\a,in=-90-\a](\x,\x);
\draw[line width=.9mm](\x,\x)to[out=-180+\a,in=-\a](-\x,\x);
\draw[line width=.9mm](-\x,\x)to[out=-90+\a,in=90-\a](-\x,-\x);
\draw[line width=.9mm](-\x,-\x)--(\x,\x);
\end{tikzpicture}
}
\newcommand{\dsquare}{\begin{tikzpicture}
\def\x{.5}
\def\a{25}
\draw[line width=.9mm] (-\x,-\x)--(\x,-\x);
\draw[line width=.9mm](\x,-\x)--(\x,\x);
\draw[line width=.9mm](\x,\x)--(-\x,\x);
\draw[line width=.9mm](-\x,\x)--(-\x,-\x);
\draw[line width=.9mm](-\x,-\x)--(\x,\x);
\end{tikzpicture}
}
\DeclareMathOperator{\cocyl}{Cocyl}
\DeclareMathOperator{\cyl}{Cyl}
\newcommand{\xto}[1]{\xrightarrow{#1}}
\newcommand{\wor}[2]{{#1}\resizebox{.25cm}{!}{\dwsquare}{#2}}
\newcommand{\cgm}[2]{\lceil{#1},{#2} \rceil} 
\newcommand{\lefts}{A}
\newcommand{\leftb}{B}
\newcommand{\lma}{i}
\newcommand{\tart}{X}
\newcommand{\tarb}{Y}
\newcommand{\tma}{\alpha}
\newcommand{\tmas}{\beta} 
\newcommand{\alefts}{A} 
\newcommand{\atarb}{Z}
\newcommand{\atma}{\beta}
\newcommand{\shl}[6]{
\raisebox{-.12cm}{\resizebox{.55cm}{!}{\dwsquare}}\hspace{-.05cm}{#6}} 
\newcommand{\slnt}[5]{
\raisebox{-.12cm}{\resizebox{.55cm}{!}{\dsquare}}\hspace{-.0cm}{#5}} 
\newcommand{\slt}[3]{
\raisebox{-.12cm}{\resizebox{.55cm}{!}{\dsquare}}\hspace{-.0cm}{#3}
} 
\subjclass[2020]{55S35, 18N40, 55U35, 55Q05}
\title[Obstruction theory]{Obstruction theory in a model category and Klein and Williams' intersection invariants
}
\author{Kate Ponto}
\address{Department of Mathematics,
University of Kentucky}
\email{kate.ponto@uky.edu}
\date{\today}
\begin{document}

\begin{abstract}
We give an obstruction theory for lifts and extensions in a  model category inspired by Klein and Williams' work on intersection theory.
In contrast to the familiar obstructions from algebraic topology, this theory  produces a single invariant that is complete in the presences of the appropriate generalizations of dimension and connectivity assumptions.   
\end{abstract}

\maketitle

\setcounter{tocdepth}{1}
\tableofcontents

\section{Introduction}
Algebraic topology has a well developed theory of obstructions to lifts and extensions of maps that reduces these questions to the vanishing of cohomology classes \cite{spanier,whitehead}.  For example, a diagram 
\[\xymatrix{&E\ar[d]^p
\\
B\ar[r]^-f&Y}\]
where $p$ is a fibration defines classes in $H^{j+1}(B,\pi_j(p^{-1}(\ast)))$. If $B$  is a finite dimensional CW complex the vanishing of these invariants implies the existence of a lift of $f$.

In their work on obstructions to removing  intersections \cite{kw}, Klein and Williams give an alternative obstruction for lifts.
Let $M(\tma,\tma)$ be the double mapping cylinder (homotopy pushout) of a map $\tma\colon \tart\to \tarb$. 
There is an inclusion map 
\[\tarb\amalg \tarb\to M(\tma,\tma).\]  A map $f\colon \leftb\to \tarb$ defines a map 
\[\chi\colon \leftb\amalg \leftb\xrightarrow{f\amalg f} \tarb\amalg \tarb\to M(\tma,\tma).\]

\begin{thm}[\cite{kw}]\label{thm:kw} 
If a map $f\colon \leftb\to \tarb$ has a lift to $\tart$ up 
to homotopy, there is an extension of  $\chi$
to a map \[\cyl(\leftb)\to M(\tma,\tma).\]

Suppose $\leftb$ is a CW complex of dimension less than $2n$, $\tma\colon \tart\to \tarb$ is $n$-connected, and 
$\chi$
extends to a map \[\cyl(\leftb)\to M(\tma,\tma).\] Then $f\colon \leftb\to \tarb$ has a lift to $\tart$ up 
to homotopy.
\end{thm}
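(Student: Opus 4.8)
The plan is to handle the two halves separately: the first is a hands‑on construction inside the double mapping cylinder, and the second reduces, via Blakers--Massey, to an elementary obstruction‑theoretic lifting problem.

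\textbf{The first statement.}
Model $M(\tma,\tma)$ as the double mapping cylinder $\tarb\sqcup(\tart\times[0,1])\sqcup\tarb$ with $(x,0)$ glued to $\tma(x)$ in the left copy of $\tarb$ and $(x,1)$ to $\tma(x)$ in the right copy; write $s_0,s_1\colon\tarb\to M(\tma,\tma)$ for the two inclusions, so that $\chi=(s_0f)\amalg(s_1f)$. Given $g\colon\leftb\to\tart$ with $\tma g\simeq f$, the map $\leftb\times[0,1]\xrightarrow{g\times\mathrm{id}}\tart\times[0,1]\hookrightarrow M(\tma,\tma)$ restricts on the two ends to $\tma g$ landing in the left, respectively right, copy of $\tarb$; splicing onto each end a short collar carrying a chosen homotopy $\tma g\simeq f$, pushed into the appropriate copy of $\tarb$, produces the desired extension of $\chi$ over $\cyl(\leftb)=\leftb\times[0,1]$.

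\textbf{The second statement.}
Since $M(\tma,\tma)$ is the homotopy pushout of $\tarb\xleftarrow{\tma}\tart\xrightarrow{\tma}\tarb$, the square with corners $\tart,\tarb,\tarb,M(\tma,\tma)$ and structure maps $\tma,\tma,s_0,s_1$ is a homotopy pushout. Let $P$ be the homotopy pullback of $\tarb\xrightarrow{s_0}M(\tma,\tma)\xleftarrow{s_1}\tarb$, modeled as triples $(y_0,y_1,\gamma)$ with $\gamma$ a path from $s_0y_0$ to $s_1y_1$, with projections $\mathrm{pr}_0,\mathrm{pr}_1\colon P\to\tarb$, and let $\phi\colon\tart\to P$ be the comparison map $\phi(x)=(\tma x,\tma x,\,t\mapsto(x,t))$, so that $\mathrm{pr}_0\phi=\tma$. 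An extension $\bar\chi\colon\leftb\times[0,1]\to M(\tma,\tma)$ of $\chi$ is precisely a homotopy from $s_0f$ to $s_1f$, hence $b\mapsto\bigl(f(b),f(b),\bar\chi(b,-)\bigr)$ defines $\psi\colon\leftb\to P$ with $\mathrm{pr}_0\psi=f$. It therefore suffices to lift $\psi$ through $\phi$ up to homotopy, since a lift $\tilde\psi\colon\leftb\to\tart$ then satisfies $\tma\tilde\psi=\mathrm{pr}_0\phi\tilde\psi\simeq\mathrm{pr}_0\psi=f$. Because $\tma$ is $n$-connected, Blakers--Massey applied to the homotopy pushout square above shows $\phi$ is $(2n-1)$-connected, i.e.\ its homotopy fiber is $(2n-2)$-connected. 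After replacing $\phi$ by a fibration, the obstructions to lifting $\psi$ lie in $H^{k+1}\bigl(\leftb;\pi_k(\mathrm{hofib}\,\phi)\bigr)$, which can be nonzero only for $k\ge 2n-1$, hence only in cohomological degrees $\ge 2n$; as $\dim\leftb<2n$ these all vanish and $\tilde\psi$ exists.

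\textbf{Where the work is.}
The single load‑bearing input is the Blakers--Massey estimate in the sharp form ``$n$-connected on both legs $\Rightarrow$ the pushout square is $(2n-1)$-cartesian'', which is exactly what makes $\dim\leftb<2n$ the correct bound; the rest is bookkeeping for the double mapping cylinder and standard obstruction theory. Two minor points of care: $\phi$ need not be a fibration, so the lifting of $\psi$ should be read after fibrant replacement (or phrased as a factorization up to homotopy through $\tart$); and the coefficients $\pi_k(\mathrm{hofib}\,\phi)$ are in general local systems on $\leftb$ pulled back along $\psi$, so one uses obstruction theory with twisted coefficients. Neither point affects the vanishing range.
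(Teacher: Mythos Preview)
Your argument is correct and matches the paper's approach. Both directions proceed exactly as here: the forward direction builds the extension by pushing $g\times\mathrm{id}$ into the cylinder coordinate and padding with the homotopy $\alpha g\simeq f$ (the paper's Proposition~\ref{prop:chi_triv}); the backward direction reads the extension as a map to the homotopy pullback $P$, identifies the comparison $\phi\colon X\to P$ with the cartesian gap map, applies Blakers--Massey to get $(2n-1)$-connectivity, and then lifts $\psi$ through $\phi$ using the dimension bound. The only cosmetic difference is that the paper phrases the final lift via the HELP lemma \cite[10.3]{concise} rather than cellular obstruction theory with local coefficients; these are equivalent here, and HELP has the minor advantage of sidestepping any worry about non-abelian $\pi_1$ of the fiber in the edge case $n=1$.
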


The map $\chi$ is a generalization of a classical fixed point invariant, the Reidemeister trace \cite{brown,husseini}, and  is closely related to Hatcher and Quinn's work \cite{hq} on intersections of submanifolds. 

\begin{rmk}
The dimension and connectivity hypotheses in \cref{thm:kw}
are reminiscent of assumptions 
used to imply the classical obstructions take values in trivial groups.  The ranges of connectivity and dimension in \cref{thm:kw} do not force this triviality.  
\end{rmk}

The fact that 
\cref{thm:kw} was proven for topological spaces  is an artifact of the motivating example.  
In this paper we prove a significant generalization of \cref{thm:kw} for model categories that incorporates extension questions and 
makes the roles of various assumptions transparent.  We work in model categories in this paper since they provide a convenient way to describe the necessary hypotheses, but I expect this argument can be easily adapted to closely related environments.

Our motivating question is the following:
\begin{question}\label{ques:motivating}
Given objects and solid arrows  so the diagram in  \eqref{fig:intro_help} commutes, what conditions imply dotted maps exist so the entire diagram commutes?
\begin{equation}
          \xymatrix{\lefts\ar[dd]^{\lma}\ar[rr]^-{i_0}&&\cyl(\lefts)\ar[dd]|\hole\ar[dl]_-H&&\lefts\ar[dd]^{\lma}\ar[ll]_{i_1}\ar[dl]_{h}
\\
&\tarb&&\tart\ar[ll] _(.35)\tma
\\
\leftb\ar[ur]^f\ar[rr]^-{i_0}&&\cyl(\leftb)\ar@{.>}[ul]_{K}&&\leftb\ar[ll]_{i_1}\ar@{.>}[ul]_{{g}}}
\label{fig:intro_help}
\end{equation}
\end{question}

The diagram in \eqref{fig:intro_help} is the  HELP diagram \cite[10.3]{concise}. 
To connect this question back to \cref{thm:kw}, let $\alefts$ be the empty space (initial object).  Then $g$ is the lift of $f$ up to the homotopy $K$.
\begin{notation}
Let $\shl{i}{\alpha}{f}{H}{h}{\eqref{fig:intro_help}}$ denote the set of pairs $(K,g)$ so that the diagram in \eqref{fig:intro_help}
commutes. 
We write $\wor{i}{\alpha}$
 if for all choices of $f$, $H$, and $h$ the set  $\shl{i}{\alpha}{f}Hh{\eqref{fig:intro_help}}$ is nonempty.  
 Similarly, let $\slnt{i}{\alpha}{g}{\ell}{\eqref{intro:notation_strict_lift}}$ be the set of strict lifts in the commutative diagram in  \eqref{intro:notation_strict_lift}.
\begin{equation}\label{intro:notation_strict_lift} \xymatrix{
A\ar[r]^-\xi\ar[d]^-i&X\ar[d]^\alpha\\
B\ar@{.>}[ur]\ar[r]^-f&Y}
\end{equation}
We write $\lma\resizebox{.25cm}{!}{\dsquare}\tma$  if for all choices of $f$ and $\xi$ the set $\slnt{i}{\alpha}{g}{\ell}{\eqref{intro:notation_strict_lift}}$  is nonempty.
\end{notation}

Our main result has three parts that mimic those of \cref{thm:kw}.  Informally they are:
\begin{enumerate}
    \item given the solid arrows in the diagram in  \eqref{fig:intro_help}, there is  a map $\chi$ generalizing that in \cref{thm:kw},
    \item 
    
    if $\shl{i}{\alpha}{f}{H}{h}{\eqref{fig:intro_help}}$ is nonempty then  $\chi$ is trivial, and 
    \item   
    
    if $\chi$ is trivial (and additional hypotheses are satisfied) then $\shl{i}{\alpha}{f}{H}{h}{\eqref{fig:intro_help}}$ is nonempty.
\end{enumerate}

To give a more formal statement of our main result, 
let 
\begin{equation}\label{eq:afib_fact}\tart \xto{c(\tma)}F(\tma)\xto{\tilde{f}(\tma)} \tarb
\end{equation}
be a factorization of $\tma\colon \tart\to \tarb $ as a cofibration and acyclic fibration.  
If $\ast$ is the terminal object, we have the following vacuously commutative diagram 
\begin{equation}\label{fig:diagram_with_terminal}
    \xymatrix{A\ar[d]^i\ar[r]^-{h}&X\ar[r]^-{c(\alpha)}&F(\alpha)\ar[d]
    \\B\ar[rr]&&\ast}
\end{equation}
    Let $N(i)$ be the double mapping cylinder of $i$ with itself and $M(\alpha,\alpha)$ be the homotopy pushout of $\alpha$ with itself.  (The obvious asymmetry in these choices is addressed in \cref{rmk:n_v_m}.)
    
\begin{thm}\label{thm:intro_model_cat}
Suppose given the solid arrows in the commutative diagram in \eqref{fig:intro_help}. 
\begin{enumerate}
    \item  \label{it:thm_kw_def} 
Each  $\hat{h}\in \slt{i}{c(\alpha)\circ h}{\eqref{fig:diagram_with_terminal}}$ defines a map 
\[\chi_{\hat{h}}\colon N(i)\to M(\alpha,\alpha)\] generalizing the map $\chi$ of \cref{thm:kw}.
\item \label{it:thm_kw_forward} 
For each pair $(K,g)\in \shl{i}{\alpha}{f}Hh{\eqref{fig:intro_help}}$ there is a $\hat{h}\in \slt{i}{c(\alpha)\circ h}{\eqref{fig:diagram_with_terminal}}$ so that  $\chi_{\hat{h}}$ extends over the cylinder on $B$.

\item \label{it:thm_kw_backward} 

Under appropriate generalizations of the dimension and connectivity conditions  (see \cref{thm:full_hy}), 
$\shl{i}{\alpha}{f}Hh{\eqref{fig:intro_help}}$ is nonempty if there is an 
 $\hat{h}\in \slt{i}{c(\alpha)\circ h}{\eqref{fig:diagram_with_terminal}}$ so that $\chi_{\hat{h}}$ extends over the cylinder on $B$.
\end{enumerate}
\end{thm}

I came to this project from the paper of Klein and Williams \cite{kw} and related work focused on fixed point theory \cite{coufal,sun}  and have not found much existing literature related to obstructions in this generality.  The closest seems to be the paper of Christensen, Dwyer and Isaksen \cite{CDI} but there does not seem to be an immediate translation between perspectives. 

\subsection*{For the reader familiar with Klein and Williams's work} After defining the map $\chi$ for topological spaces, Klein and Williams \cite{kw} show the corresponding stable invariant is only trivial when $\chi$ is trivial and they use duality to give an alternative description of their stable invariant.  Those steps don't make sense for the generality considered here and so are omitted.

\subsection*{Organization} In \S\ref{sec:euler_class} we define the map $\chi$ and prove  \cref{thm:intro_model_cat}\ref{it:thm_kw_forward}. 
In \S\ref{sec:c_gap_map} we  fix notation and recall the cartesian gap map and the Blakers-Massey theorem. In \S\ref{sec:main_proof} we prove  \cref{thm:intro_model_cat}\ref{it:thm_kw_backward}. In \S\ref{sec:model} we finish up some model category business.

\subsection*{Acknowledgements}
This paper evolved from conversations with Inbar Klang, Sarah Yeakel, and Cary Malkiewich about generalizations of fixed point invariants. 
Many thanks to Inbar and Sarah for comments on an earlier version of this paper.
Also thanks to Nima  Rasekh and Dan Duggar for helping untangle confusions.   Finally, thanks to the referee for careful reading and helpful suggestions.

The author was partially supported by NSF grant  DMS-1810779 and the Royster Research Professorship at the University of Kentucky.

\section{The Euler class}\label{sec:euler_class}

Working in a model category with functorial good cylinders, let 
\[\leftb\amalg \leftb\xto{i_0\amalg i_1}\cyl(\leftb)\xto{\pi}\leftb\]
be a factorization of the fold map through the functorial cylinder.  When we need to specify the domain of $i_0$ or $i_1$ we will write $i_{0,\leftb}$ or $i_{1,\leftb}$.  If $\lma\colon \lefts\to \leftb$, let $N_j(\lma)$, $j=0,1$, be the mapping cylinder of $\lma$ and $i_j$:
\begin{equation}N_j(\lma)\coloneqq \text{Pushout}(\cyl(\lefts)\xleftarrow{i_j}\lefts\xto{\lma}\leftb). \end{equation}
Let $\iota_j\colon N_j(\lma)\to \cyl(B)$ be the map induced by $\cyl(\lma)\colon \cyl(\lefts)\to\cyl(\leftb) $ and $i_{j,\leftb} $.
Let  $N(\lma)$ be the double mapping cylinder on $\lma$:  
\[N(\lma)\coloneqq \text{Pushout}(\cyl(\lefts)\xleftarrow{i_0\amalg i_1}\lefts\amalg \lefts\xto{\lma\amalg \lma}\leftb\amalg \leftb)\] and $\iota\colon N(i)\to \cyl(B)$ be induced by the universal property of the pushout.

For maps $\tma\colon \tart\to \tarb $ and $ \atma\colon \tart\to \atarb$ recall the factorization from \eqref{eq:afib_fact} and 
let
$M(\tma,\atma)$ be the pushout in the diagram in  \eqref{eq:defn_Maa}.

     \begin{equation}
         \xymatrix{\tart\ar[r]^-{c(\tma)}\ar[d]^-{\atma}&F(\tma)\ar@{.>}[d]^j
\\
\atarb\ar@{.>}[r]^-{j'}&M(\tma,\atma)}
    \label{eq:defn_Maa}
    \end{equation}

\begin{rmk}\label{rmk:n_v_m}
The choice to make $N(i)$ the double mapping cylinder and $M(\alpha,\beta)$ the homotopy pushout is  intentional.  In what follows we will need to have very explicit access to the cylinder in $N(i)$, while there will not be similar requirements  for $M(\alpha,\beta)$.
\end{rmk}

\begin{lem}[\cref{thm:intro_model_cat}\ref{it:thm_kw_def}]\label{def:chi}
There is a function 
\[\slt{i}{c(\alpha)\circ h}{\eqref{fig:diagram_with_terminal}} \xto{\chi} \mathrm{Map}(N(i),M(\alpha,\alpha)).\] 
\end{lem}

\begin{proof}
If $\hat{h}\in \slt{i}{c(\alpha)\circ h}{\eqref{fig:diagram_with_terminal}}$
then $\hat{h}$ makes 
the top square in the diagram in  \eqref{fig:def_chi} 
commute.  The remaining squares of the diagram in  \eqref{fig:def_chi} commute by the diagrams in  \eqref{fig:intro_help} and \eqref{eq:defn_Maa}. Then \[\chi_{\hat{h}}\colon N(\lma)\to M(\tma,\tma)\] is induced by the diagram in  \eqref{fig:def_chi} using the universal property of the pushout.
\end{proof}

\noindent\begin{subequations}
\begin{minipage}{.49\textwidth}
\begin{equation}
\xymatrix{
\lefts\ar[r]^\lma\ar[d]^{i_1}\ar[rd]^h&\leftb\ar[dr]^{\hat{h}}
\\
N_0(\lma)\ar[rd]_{f\amalg H}&\tart\ar[r]^{c(\tma)}\ar[d]^\tma&F(\tma)\ar[d]^j
\\
&\tarb\ar[r]^-{j'}&M(\tma,\tma)}
    \label{fig:def_chi}
    \end{equation}
    \end{minipage}
    \begin{minipage}{.49\textwidth}
\begin{equation}
\xymatrix{N(\lma)\ar[d]^{\iota}\ar[r]^-{\chi_{\hat{h}}}&M(\tma,\tma)
\\
\cyl(\leftb)\ar@{.>}[ur]_{\mathcal{X}}}
    \label{fig:def_chi_triv}
\end{equation}
    \end{minipage}
    \end{subequations}

We say $\chi_{\hat{h}}$ is {\bf trivial} if there is a map $\mathcal{X}$ as in the diagram in  \eqref{fig:def_chi_triv}.  We say $\chi$ is {\bf trivial} if the set 
\[\{\chi_{\hat{h}}\}_{\hat{h}\in \slt{i}{c(\alpha)\circ h}{\eqref{fig:diagram_with_terminal}}}.\]
is nonempty and all $\chi_{\hat{h}}$ are trivial.

\begin{rmk}
In special cases (see \cref{rmk:hathAinitial,rmk:hathsection}) there are topologically motivated assumptions that guarantee $\slt{i}{c(\alpha)\circ h}{\eqref{fig:diagram_with_terminal}}$ is nonempty. There don't appear to be similar assumptions when working at this level of generality.
\end{rmk}

\begin{prop}[\cref{thm:intro_model_cat}\ref{it:thm_kw_forward}]\label{prop:chi_triv}

There is a function 
$\shl{i}{\alpha}{f}Hh{\eqref{fig:intro_help}}\to \slt{i}{c(\alpha)\circ h}{\eqref{fig:diagram_with_terminal}} $
and the composite 
\[\shl{i}{\alpha}{f}Hh{\eqref{fig:intro_help}}\to \slt{i}{c(\alpha)\circ h}{\eqref{fig:diagram_with_terminal}} \xto{\chi} \mathrm{Map}(N(i),M(\alpha,\alpha))\] 
is trivial. 
\end{prop}

\begin{proof}
For $(K,g)\in \shl{i}{\alpha}{f}Hh{\eqref{fig:intro_help}}$, take $\hat{h}\in \slt{i}{c(\alpha)\circ h}{\eqref{fig:diagram_with_terminal}}$ to be composite \[\leftb\xto{g}\tart\xto{c(\tma)} F(\tma).\]  Then the diagram in  \eqref{fig:def_chi_triv_start} defines a map $\cyl(\leftb)\to M(\tma,\tma)$ that  extends $\chi_{\hat{h}}$.
\end{proof}
\begin{equation}
\xymatrix{\leftb\ar[r]^-\id\ar[d]^{i_1}\ar[rd]^g&\leftb\ar[dr]^{\hat{h}\coloneqq c(\alpha)\circ g}
\\
\cyl(\leftb)\ar[dr]^K&\tart\ar[r]^-{c(\tma)}\ar[d]^-\tma&F(\tma)\ar[d]^j
\\
&\tarb\ar[r]^-{j'}&M(\tma,\tma)}
    \label{fig:def_chi_triv_start}
\end{equation}

\section{The cartesian gap map}\label{sec:c_gap_map}
For a map $\tma\colon \tart\to \tarb $, let 
\begin{equation}\label{eq:acofib_fact}
    \tart\xto{\tilde{c}(\tma)} C(\tma)\xto{{f}(\tma)} \tarb 
\end{equation} be a factorization of $\tma$ as an acyclic cofibration and a fibration.
For maps $\tma\colon \tart\to \tarb $ and $ \atma\colon \tart\to \atarb$
let
$P(\tma,\atma)$ be the pullback in the diagram in  \eqref{eq:blma_pmap}.

\noindent\begin{subequations}
\begin{minipage}{.49\textwidth}
            \begin{equation}
\xymatrix{P(\tma,\atma)\ar@{.>}[r]^-k\ar@{.>}[d]^{k'}&C(j)\ar[d]^{f(j)}
\\
\atarb\ar[r]^-{j'}&M(\tma,\atma)
}
\label{eq:blma_pmap}
\end{equation}
    \end{minipage}
    \begin{minipage}{.49\textwidth}
        \begin{equation}
\xymatrix{\tart\ar@{.>}[dr]^{\cgm{\tma}{\atma}}\ar@/_/[ddr]_{\tmas}\ar[r]^-{c(\tma)}
&F(\tma)\ar@/^/[dr]^-{\tilde{c}(j)}
\\
&P(\tma,\atma)\ar[r]^-k\ar[d]^{k'}&C(j)\ar[d]^{f(j)}
\\
&\atarb\ar[r]^-{j'}&M(\tma,\atma)
}\label{eq:blma_map}
\end{equation}
    \end{minipage}
    \end{subequations}

\begin{defn}\label{def:cgm}
The {\bf cartesian gap map} of $\alpha$ and $\beta$, denoted $\cgm{\tma}{\atma}$, is the dotted map in the diagram in  \eqref{eq:blma_map} induced by the universal property of the pullback.
\end{defn}

\begin{example}\label{ex:bm} There are several important connectivity results for the cartesian gap map.

\begin{enumerate}
    \item In a stable model category, such as chain complexes of $R$-modules or orthogonal spectra,  the homotopy cartesian and cocartesian squares agree and the cartesian gap map is a weak equivalence \cite[7.1.12]{hovey}.  
    \item In the category of topological spaces, if $\tma \colon \tart\to \tarb$ is $n$-connected and $\atma \colon \tart\to \atarb$ is $n'$-connected the classical Blakers-Massey theorem \cite[6.9]{td} asserts  $\cgm{\alpha}{\beta}$ is  $(n+n'-1)$-connected.  
    
    There is a similar, but significantly more complicated, version of the Blakers-Massey theorem for spaces with an action of a finite group \cite{dotto}.
    \item 
    The recent papers \cite{AGFJ,CSW} prove generalizations of the classical Blakers-Massey theorem.   \cite{AGFJ} proves a version for  higher topoi.  

\end{enumerate}
\end{example}

The essential hypothesis in the converse of \cref{prop:chi_triv} is a lifting condition for the cartesian gap map.  The first two examples in \cref{ex:bm} imply relevant lifting conditions.

\begin{prop}\label{prop:top_bm}\hfill 
\begin{enumerate}
    \item\label{prop:top_bm1} 
    In a model category, if $A$ is cofibrant, $\iota$ is a cofibration, $P(\tma,\atma)$ is fibrant,  and  $i\resizebox{.25cm}{!}{\dsquare} (X\times_{\cgm{\alpha}{\beta}, \ev_0}\cocyl(P(\tma,\atma))\xto{\ev_1}P(\tma,\atma))$ then
\[\wor{i}{\cgm{\alpha}{\beta}}.\]
\item \label{it:top_bm}
If $\lma\colon \lefts\to \leftb$ is a relative CW complex of dimension m, $\tma \colon \tart\to \tarb$ is $n$-connected, $\atma \colon \tart\to \atarb$ is $n'$-connected and $m\leq (n+n'-1)$ then \[\wor{\lma}{\cgm{\tma}{\atma}}.\] 
\end{enumerate}
\end{prop}

\begin{proof}
We postpone the proof of the first statement to  \cref{sec:model} since it is comparatively lengthy and not illuminating for the ideas considered here.
See page \pageref{proof:top_bm}
with $\cgm{\alpha}{\beta}=\theta$.

For the second, the classical Blakers-Massey theorem implies the map 
\[\tart\to P(\tma,\tmas)\]
is $(n+n'-1)$-connected.  Then the homotopy extension and lifting property \cite[10.3]{concise} produces the required lift.
\end{proof}

\cref{prop:top_bm}\ref{it:top_bm} should have an equivariant generalization following  \cite{dotto} and this  should allow for an alternative approach to the main result in  \cite{kw2}.

\section{Proof of the main result}\label{sec:main_proof}
Recall the mapping cylinder $N_j(\lma)$  and maps $\iota_j$ from the introduction.
The maps in \eqref{fig:intro_help}, \eqref{eq:afib_fact}, and \eqref{eq:acofib_fact}  define the  commutative diagram in \eqref{fig:converse_assumption}.  
\begin{equation}
\xymatrix{
\lefts\ar[r]^-h\ar[d]^{i_1}&\tart\ar[r]^-{c(\tma)}\ar@{-->}[d]^\alpha&F(\tma)\ar@{-->}[dr]^{j}\ar[r]^-{\tilde{c}(j)}&C(j)\ar[d]^{f(j)}
\\
N_0(\lma)\ar[r]^-{H\amalg f}& \tarb\ar[rr]^-{j'}&&M(\tma,\tma)}
\label{fig:converse_assumption}
\end{equation}

\begin{lem}\label{lem:construct_phi_psi}
An element $\lambda \in \slnt{}{}{}{}{ \eqref{fig:converse_assumption}}$
defines maps $\Phi_\lambda\colon B\to P(\alpha, \alpha)$ and $\Psi_\lambda\colon \cyl(A)\to P(\alpha, \alpha)$ so the solid arrow portions of the diagram in  \eqref{fig:def_big_diagram} commute.
\end{lem}
        \begin{equation}
\xymatrix{\lefts\ar[rr]^-{i_0}\ar[dd]^\lma\ar@0{->}[dr]|(.5){R1}
&&\cyl(\lefts)\ar[dd]|\hole\ar[dl]_{\Psi_\lambda}
\ar@0{->}[dr]|(.5){R2}
&&\lefts\ar[ll]_-{i_1}\ar[dd]^\lma\ar[dl]_{h}
\\
&P(\tma,\tma)&&\tart\ar[ll]_(.3){\cgm{\tma}{\tma}}
\\
\leftb\ar[rr]^-{i_0}\ar[ur]^{\Phi_\lambda}&&\cyl(\leftb)\ar@{.>}[ul]^{\hat{\Psi}}&&\leftb\ar[ll]_{i_1}\ar@{.>}[ul]^{\hat{g}}
}
    \label{fig:def_big_diagram}
    \end{equation}

\begin{proof}
 A lift $\lambda$ in the diagram in  \eqref{fig:converse_assumption} defines maps 
\[\phi\colon \leftb\to C(j)\text{ and }\psi\colon \cyl(\lefts)\to C(j)\] so that the diagrams in \eqref{fig:lift_equiv_1}, \eqref{fig:lift_equiv_2}, \eqref{fig:lift_equiv_3}, and \eqref{fig:lift_equiv_4} commute.  

\noindent\begin{subequations}
    \begin{minipage}{.49\textwidth}
    \begin{equation}
        \xymatrix{\leftb\ar[r]^-{\phi}\ar[d]^f&C(j)\ar[d]^{f(j)}
\\
\tarb\ar[r]^-{j'}&M(\tma,\tma)}
     \label{fig:lift_equiv_1}
    \end{equation}
    \end{minipage}
        \begin{minipage}{.49\textwidth}
        \begin{equation}
\xymatrix{\cyl(\lefts)\ar[r]^-{\psi}\ar[d]^{H}&C(j)\ar[d]^{f(j)}\\\tarb\ar[r]^-{j'}&M(\tma,\tma)}
        \label{fig:lift_equiv_2}
    \end{equation}
    \end{minipage}
    
    \noindent
        \begin{minipage}{.49\textwidth}
        \begin{equation}
        \xymatrix{\lefts\ar[d]^{i_1}\ar[r]^-h&\tart\ar[r]^-{c(\tma)}&F(\tma)\ar[d]^{\tilde{c}(j)}
        \\
        \cyl(\lefts)\ar[rr]^-{\psi}&&C(j)}
        \label{fig:lift_equiv_3}
    \end{equation}
    \end{minipage}
        \begin{minipage}{.49\textwidth}
        \begin{equation}
        \xymatrix{\lefts\ar[r]^-{i_0}\ar[d]^\lma&\cyl(\lefts)\ar[d]^{\psi}
\\
\leftb\ar[r]^{\phi}&C(j)}
      \label{fig:lift_equiv_4}
    \end{equation}
    \end{minipage}
\end{subequations}

The diagram in \eqref{fig:lift_equiv_1} defines a map 
\[\Phi_\lambda\colon \leftb\to P(\tma,\tma)\] and  the diagram in  \eqref{fig:lift_equiv_2} defies a map 
\[\Psi_\lambda\colon \cyl(\lefts)\to P(\tma,\tma).\]

Since $P(\alpha,\alpha)$ is a pullback, to show the regions $R1$ and $R2$ of the diagram in  \eqref{fig:def_big_diagram} commute it is
enough to show they commute after composition  with $k$ and $k'$ from the diagram in \eqref{eq:blma_pmap}.  This gives the following four diagrams where the justification for the commutativity of each region is indicated in that region.

\noindent\begin{subequations}
        \begin{minipage}{.49\textwidth}
        \begin{equation}
        \xymatrix@C=6pt@R=6pt{A\ar[dd]_-h\ar[rr]^-{i_1}\ar@0{->}[ddddrrrr]|(.35){\eqref{fig:lift_equiv_3}}
&&\cyl(A)\ar[rr]^-{\Psi_\lambda}\ar[rrdddd]|\psi
&\ar@0{->}[ddddr]|(.35){\mathrm{Def}\, \Psi_\lambda}&P(\alpha,\alpha)\ar[dddd]^k
\\
&&&
\\
X\ar[drr]|{c(\tma)}\ar[dd]_{\cgm{\tma}{\tma}}
\\
\ar@0{->}[drrrr]|(.25){\mathrm{Def}\, \cgm{\tma}{\tma}}
&&F(\alpha)\ar[drr]|{\tilde{c}(j)}
\\
P(\alpha,\alpha)\ar[rrrr]_-k
&&&&C(j)}\label{fig:lift_equiv_next_5}
\end{equation}
\end{minipage}
        \begin{minipage}{.49\textwidth}
        \begin{equation}
\xymatrix@C=6pt@R=6pt{A\ar[dd]_-h\ar[rr]^-{i_1}\ar@0{->}[ddddrrrr]|(.35){\eqref{fig:intro_help}}
&&\cyl(A)\ar[rr]^-{\Psi_\lambda}\ar[rrdddd]|H
&\ar@0{->}[ddddr]|(.35){\mathrm{Def}\, \Psi_\lambda}&P(\alpha,\alpha)\ar[dddd]^{k'}
\\
&&&
\\
X\ar[ddrrrr]|{\tma}\ar[dd]_{\cgm{\tma}{\tma}}
\\
\ar@0{->}[drrrr]|(.25){\mathrm{Def}\, \cgm{\tma}{\tma}}
&&
\\
P(\alpha,\alpha)\ar[rrrr]_-{k'}
&&&&Y}\label{fig:lift_equiv_next_6}
\end{equation}
\end{minipage}

  \noindent      \begin{minipage}{.49\textwidth}
        \begin{equation}
\xymatrix@C=6pt@R=6pt{A\ar[dd]_-i\ar[rr]^-{i_0}\ar@0{->}[ddddrrrr]|(.35){\eqref{fig:lift_equiv_4}}
&&\cyl(A)\ar[rr]^-{\Psi_\lambda}\ar[rrdddd]|\psi
&\ar@0{->}[ddddr]|(.35){\mathrm{Def}\, \Psi_\lambda}&P(\alpha,\alpha)\ar[dddd]^{k}
\\
&&&
\\
B\ar[ddrrrr]|{\phi}\ar[dd]_{\Phi_\lambda}
\\
\ar@0{->}[drrrr]|(.25){\mathrm{Def}\, \Phi_\lambda}
&&
\\
P(\alpha,\alpha)\ar[rrrr]_-{k}
&&&&C(j)}\label{fig:lift_equiv_next_7}
\end{equation}
\end{minipage}
        \begin{minipage}{.49\textwidth}
        \begin{equation}
\xymatrix@C=6pt@R=6pt{A\ar[dd]_-i\ar[rr]^-{i_0}\ar@0{->}[ddddrrrr]|(.35){\eqref{fig:intro_help}}
&&\cyl(A)\ar[rr]^-{\Psi_\lambda}\ar[rrdddd]|H
&\ar@0{->}[ddddr]|(.35){\mathrm{Def}\, \Psi_\lambda}&P(\alpha,\alpha)\ar[dddd]^{k'}
\\
&&&
\\
B\ar[ddrrrr]|{f}\ar[dd]_{\Phi_\lambda}
\\
\ar@0{->}[drrrr]|(.25){\mathrm{Def}\, \Phi_\lambda}
&&
\\
P(\alpha,\alpha)\ar[rrrr]_-{k'}
&&&&Y}\label{fig:lift_equiv_next_8}
\end{equation}
\end{minipage}

        \end{subequations}
Then the diagrams in  \eqref{fig:lift_equiv_next_5} and \eqref{fig:lift_equiv_next_6} imply region $R2$ of the diagram in \eqref{fig:def_big_diagram} commutes and the diagrams in  \eqref{fig:lift_equiv_next_7} and \eqref{fig:lift_equiv_next_8} imply region $R1$ of the diagram in \eqref{fig:def_big_diagram} commutes.
\end{proof}

\begin{prop}\label{thm:min_hyp_statement}
For each $\lambda \in \slnt{}{}{}{}{ \eqref{fig:converse_assumption}}$, there is a function
\[
\shl{i}{\cgm{\tma}{\tma}}{\Phi_\lambda}{\Psi_\lambda}h{\eqref{fig:def_big_diagram}}
\to \shl{i}{\alpha}{f}Hh{\eqref{fig:intro_help}}\]
\end{prop}

\begin{proof}
If $(\hat{\Psi},\hat{g})\in \shl{i}{\cgm{\tma}{\tma}}{\Phi_\lambda}{\Psi_\lambda}h{\eqref{fig:def_big_diagram}}$ then 
$(k'\circ \hat{\Psi},\hat{g})$ is an element of $\shl{i}{\alpha}{f}Hh{\eqref{fig:intro_help}}$.  See the diagram in  \eqref{fig:extended_diagram}.
\end{proof}
        \begin{equation}
\xymatrix{\lefts\ar[rrr]^-{i_0}\ar[dd]^\lma&&&\cyl(\lefts)\ar[dd]|\hole\ar[dl]_{\Psi_\lambda}\ar@/_/[dll]_-{H}&&\lefts\ar[ll]_-{i_1}\ar[dd]^\lma\ar[dl]_{h}
\\
&\tarb&\ar[l]_{k'}P(\tma,\tma)&&\tart\ar[ll]_(.3){\cgm{\tma}{\tma}}
\\
\leftb\ar[rrr]^-{i_0}\ar[ur]^f\ar@/_/[urr]^{\Phi_\lambda}&&&\cyl(\leftb)\ar@{.>}[ul]^{\hat{\Psi}}&&\leftb\ar[ll]_{i_1}\ar@{.>}[ul]^{\hat{g}}
}\label{fig:extended_diagram}
    \end{equation}

We now compare a lift in the diagram in  \eqref{fig:converse_assumption} to the vanishing of $\chi_{\hat{h}}$.

\begin{lem}\label{lem:lift_conditions}
Suppose given the solid arrows in the diagram in  \eqref{fig:intro_help},  and assume that $i_{1,A} 
\resizebox{.25cm}{!}{\dsquare} f(j)$ and
$\iota_1 \resizebox{.25cm}{!}{\dsquare} f(j)$. 
 There is a function 
 \[\ell\colon \mathrm{ker}(\chi)\to\slnt{}{}{}{}{ \eqref{fig:converse_assumption}}.\]
\end{lem}

\begin{proof} Since $i_{1,A} 
\resizebox{.25cm}{!}{\dsquare}f(j)$,
there is a lift $\lambda$ in the diagram in  \eqref{fig:def_b}.
\begin{equation}
        \xymatrix{\lefts\ar[r]^-h\ar[d]^{i_1}&\tart\ar@{-->}[d]^\tma\ar[r]^{c(\tma)}&F(\tma)\ar@{-->}[dr]^j
\ar[r]^{\tilde{c}(j)}&C(j)\ar[d]^{f(j)}
\\
\cyl(\lefts)
\ar[r]^-H&\tarb\ar[rr]^-{j'}&&M(\tma,\tma)}
    \label{fig:def_b}
\end{equation}
    Then the diagram in \eqref{fig:n1_pullback}
    commutes.  A lift in the diagram in  \eqref{fig:n1_pullback} and $c(\alpha)\circ \hat{h}$ define a map $\mu\colon N_1(\lma)\to C(j)$.  Using this map,  the diagram in  
    \eqref{fig:def_c_3} commutes.  Since $\iota_1 \resizebox{.25cm}{!}{\dsquare} f(j)$, the diagram in  \eqref{fig:def_c_3}  has a lift $\nu$. 
    
\noindent
\begin{subequations}
    \begin{minipage}{.49\textwidth}
    \begin{equation}
        \xymatrix{\lefts\ar[dd]^{i_1}\ar[rr]^\lma\ar[dr]^h&&\leftb\ar[d]^{\hat{h}}
    \\&\tart\ar[r]^-{c(\tma)}&F(\tma)\ar[d]^{\tilde{c}(j)}
    \\
    \cyl(\lefts)\ar[rr]^-\lambda&&C(j)
    }
 \label{fig:n1_pullback}
    \end{equation}
    \end{minipage}
        \begin{minipage}{.49\textwidth}
    \begin{equation}
        \xymatrix{N_1(i)\ar[r]^-{\mu}\ar[d]&C(j)\ar[dd]^{f(j)}
 \\
 N(\lma)\ar[dr]^-{\chi_{\hat{h}}}\ar[d]
 \\
 \cyl(\leftb)\ar@{.>}[uur]^{\nu}\ar[r]^-{\mathcal{X}}&M(\tma,\tma)}
    \label{fig:def_c_3}
    \end{equation}
    \end{minipage}
\end{subequations}

Expanding the diagram in \eqref{fig:def_c_3}, the diagrams in \eqref{fig:def_c_4} and \eqref{fig:def_c_5} commute.

\noindent
\begin{subequations}
    \begin{minipage}{.49\textwidth}
    \begin{equation}
        \xymatrix{
\lefts\ar[dd]^\lma\ar[r]^-{i_0}&\cyl(\lefts)\ar[dr]^{\lambda}\ar[d]
\\
&N_1(\lma)\ar[r]^{\mu}\ar[d]&C(j)
\\
\leftb\ar[r]^-{i_0}&\cyl(\leftb)\ar[ru]^-{\nu}}
    \label{fig:def_c_4}
    \end{equation}
    \end{minipage}
    \begin{minipage}{.49\textwidth}
    \begin{equation}
        \xymatrix{
\leftb\ar[d]^f\ar[r]^-{i_0}&\cyl(\leftb)\ar[dr]^{\mathcal{X}}\ar[r]^-{\nu}&C(j)\ar[d]^{f(j)}
\\
\tarb\ar[rr]^{j'}&&M(\tma,\tma)}
    \label{fig:def_c_5}
    \end{equation}
    \end{minipage}
\end{subequations}

\noindent The diagram in \eqref{fig:def_c_4} shows that $B\xto{i_0}\cyl(B)\xto{\nu} C(j)$ and $\lambda$ define a map $N_0(\lma)\to C(j)$.  To verify this is a lift for the diagram in  \eqref{fig:converse_assumption}, notice that the restriction to $\cyl(A)$ commutes by the diagram in  \eqref{fig:def_b} and the restriction to $B$ commutes by the diagram in  \eqref{fig:def_c_5}.   
\end{proof}

Using \cref{lem:construct_phi_psi,lem:lift_conditions}, $\hat{h}\in \mathrm{ker}(\chi)$ gives the following (minor) variation on the diagram in  \eqref{fig:def_big_diagram}.
        \begin{equation}
\xymatrix{\lefts\ar[rr]^-{i_0}\ar[dd]^\lma
&&\cyl(\lefts)\ar[dd]|\hole\ar[dl]_{\Psi_{\ell(\hat{h})}}
&&\lefts\ar[ll]_-{i_1}\ar[dd]^\lma\ar[dl]_{h}
\\
&P(\tma,\tma)&&\tart\ar[ll]_(.3){\cgm{\tma}{\tma}}
\\
\leftb\ar[rr]^-{i_0}\ar[ur]^{\Phi_{\ell(\hat{h})}}&&\cyl(\leftb)\ar@{.>}[ul]^{\hat{\Psi}}&&\leftb\ar[ll]_{i_1}\ar@{.>}[ul]^{\hat{g}}
}
    \label{fig:def_big_diagram_ell}
    \end{equation}

The following result is 
an immediate consequence of  \cref{thm:min_hyp_statement,lem:lift_conditions}.  
\begin{thm}[\cref{thm:intro_model_cat}.\ref{it:thm_kw_backward}]\label{thm:full_hy}
Suppose given the solid arrows in the diagram in  \eqref{fig:intro_help}  and assume  
$i_{1,A} 
\resizebox{.25cm}{!}{\dsquare} f(j)$ and
$\iota_1 \resizebox{.25cm}{!}{\dsquare} f(j)$.

If there is an  $\hat{h}\in \slt{i}{c(\alpha)\circ h}{\eqref{fig:diagram_with_terminal}}$ so that 
 $\chi_{\hat{h}}$ is trivial and $\shl{i}{\cgm{\tma}{\tma}}{\Phi_{\ell(\hat{h})}}{\Psi_{\ell(\hat{h})}}h{\eqref{fig:def_big_diagram_ell}}$
 is nonempty then $\shl{i}{\alpha}{f}{H}{h}{\eqref{fig:intro_help}}$ is nonempty.
\end{thm}

\begin{rmk}
The hypotheses $i_{1,A} 
\resizebox{.25cm}{!}{\dsquare} f(j)$ and
$\iota_1 \resizebox{.25cm}{!}{\dsquare} f(j)$ can be satisfied by assuming $A$ is cofibration (so $i_{1,A}$ is a cofibration) and $\iota_1$ is an acyclic cofibration.  These parallel the assumption that $B$ is a CW complex in \cref{thm:kw}.
\end{rmk}

Note that this theorem requires $\slt{i}{c(\alpha)\circ h}{\eqref{fig:diagram_with_terminal}}$ to be nonempty, so in the following result 
we include that assumption explicitly.

\begin{cor}\label{cor:full_hy} For maps $i\colon A\to B$ and $\alpha\colon X\to Y$ assume 
\begin{itemize}
    \item $i_{1,A} 
\resizebox{.25cm}{!}{\dsquare} f(j)$,
$\iota_1 \resizebox{.25cm}{!}{\dsquare} f(j)$, $i\resizebox{.25cm}{!}{\dsquare}(F(\alpha)\to \ast)$ and  
\item $\wor{i}{\cgm{\tma}{\tma}}$.
\end{itemize}
  If  $\chi$ is trivial  then
  $\wor{i}{\alpha}$.
\end{cor}
\begin{rmk}
The statement paralleling this for \cref{prop:chi_triv} doesn't seem to have a similarly clear presentation and so we have omitted it.
\end{rmk}

We now  turn to some special cases of \cref{thm:full_hy}. 
\subsection{\texorpdfstring{$A$}{A} is initial}
If $A$ is the initial object, we are in the case considered in \cref{thm:kw}.  Here 
$\hat{h}$ is a map $B\to F(\alpha)$ so that 
\[\xymatrix{\emptyset\ar[r]\ar[d]
&F(\alpha)\ar[d]
\\
B\ar[r]\ar@{.>}[ur]^{\hat{h}}&\ast}
\]
commutes.  That is, it is a map $B\to F(\alpha)$ with no additional conditions.
In \cref{thm:full_hy} the hypothesis $i_{1,\emptyset}\resizebox{.25cm}{!}{\dsquare} f(j)$ is vacuous and $\iota_1\resizebox{.25cm}{!}{\dsquare} f(j)$ becomes $i_{1,B}\resizebox{.25cm}{!}{\dsquare} f(j)$. 
Then we have the following version of \cref{thm:full_hy}.

\begin{cor}

Suppose given the solid arrows in the diagram in  \eqref{fig:intro_help} with $A$ the initial object. Assume 
$i_{1,B} \resizebox{.25cm}{!}{\dsquare} f(j)$. 

If there is an  $\hat{h}\colon B\to F(\alpha)$ 
so that 
 $\chi_{\hat{h}}$ is trivial and $\shl{i}{\cgm{\tma}{\tma}}{\Phi_{\ell(\hat{h})}}{\Psi_{\ell(\hat{h})}}h{\eqref{fig:def_big_diagram_ell}}$ 
 is nonempty then $\shl{i}{\alpha}{f}{H}{h}{\eqref{fig:intro_help}}$ is nonempty.
\end{cor}

\begin{rmk}\label{rmk:hathAinitial} If $B$ is cofibrant, the following diagram will produce a choice of $\hat{h}$.
\[\xymatrix{\emptyset\ar[r]\ar[d]
&F(\alpha)\ar[d]^{\tilde{f}(\alpha)}
\\
B\ar[r]^-f\ar@{.>}[ur]^{\hat{h}}&Y}
\]
The map $\chi$ in \cref{thm:kw}  is defined using this  diagram with  the mapping cylinder for $F(\alpha)$ and the inclusion as $\hat{h}$.
\end{rmk}

\subsection{Sections}\label{sec:sections}
The maps in a commutative diagram as in \eqref{eq:section_lift} 
\begin{equation}
    \label{eq:section_lift}
    \xymatrix{\lefts\ar[r]^-g\ar[d]^\lma&\tart\ar[d]^\tma
    \\
    \leftb\ar[r]^-f&\tarb}
\end{equation}
define maps for the diagram \eqref{fig:intro_help} if we take  $H$ to be the composite \[\cyl(\lefts)\xto{\pi}\lefts \xto{\lma}\leftb \xto{f}\tarb
=\cyl(\lefts)\xto{\pi}\lefts \xto{h}\tart\xto{\tma} \tarb.\]

\begin{cor}
Suppose the diagram in  \eqref{eq:section_lift} commutes and assume 
$\iota_1 \resizebox{.25cm}{!}{\dsquare} f(j)$ and $\iota_1\resizebox{.25cm}{!}{\dsquare} \alpha$.

If there is an  $\hat{h}\in \slt{i}{c(\alpha)\circ h}{\eqref{fig:diagram_with_terminal}}$ so that 
 $\chi_{\hat{h}}$ is trivial and $\shl{i}{\cgm{\tma}{\tma}}{\Phi_{\ell(\hat{h})}}{\Psi_{\ell(\hat{h})}}h{\eqref{fig:def_big_diagram_ell}}$ 
 is nonempty then
there is a (strict) lift  in the diagram in  \eqref{eq:section_lift}. 
\end{cor}

\begin{proof}
The hypothesis $i_{1,A}\resizebox{.25cm}{!}{\dsquare} f(j)$ of \cref{lem:lift_conditions} can be removed since  
\[\cyl(\lefts)\xto{\pi}\lefts \xto{h}\tart\xto{c(\tma)} F(\tma)\xto{\tilde{c}(j)} C(j).\]
defines a lift $\lambda$ in the diagram in  \eqref{fig:def_b}. 

Using \cref{thm:full_hy} we have maps $K$ and $g$ so that the diagram in  \eqref{fig:intro_help} commutes.  Since $\iota_1\resizebox{.25cm}{!}{\dsquare} \tma$ and the diagram in \eqref{eq:section_lift_int} commutes, the diagram in \eqref{eq:section_lift_int} has a lift $J$.
Then $\leftb\xto{i_0} \cyl(\leftb)\xto{J} \tart$ is an extension of $g$ lifting $f$ since the diagram in  \eqref{eq:expanded_section} commutes.
\end{proof}

\noindent     \begin{subequations}
    \begin{minipage}{.4\textwidth}
\begin{equation}\label{eq:section_lift_int}
\xymatrix{N_1(\lma)\ar[r]^-{H\amalg g}\ar[d]^{\iota_1}&\tart\ar[d]^\tma
\\
\cyl(\leftb)\ar[r]^-{K}&\tarb}
\end{equation}
    \end{minipage}
        \begin{minipage}{.58\textwidth}
\begin{equation}
    \xymatrix{   \lefts\ar[rd]^-{i_0}\ar[ddd]^{\lma}\ar[rr]^-h&&\tart\ar[ddd]^\tma
    \\
    &\cyl(\lefts)\ar[d]_{\cyl(\lma)}\ar[ru]^-H
    \\
    &\cyl(\leftb)\ar[dr]^{K}\ar[uru]_J
    \\
        \leftb\ar[ur]^-{i_0}\ar[rr]^-f&&\tarb
    }\label{eq:expanded_section}
\end{equation}
    \end{minipage}
    \end{subequations}

\begin{rmk}\label{rmk:hathsection} If $i\colon A\to B$ is a cofibration and $h$ is a lift of $f\circ i$, then the following diagram produces a choice of $\hat{h}$
\[\xymatrix{A\ar[r]^h\ar[d]^i&X\ar[r]^-{c(\alpha)}
&F(\alpha)\ar[d]^{\tilde{f}(\alpha)}
\\
B\ar[rr]^f\ar@{.>}[urr]^{\hat{h}}&&Y}
\]
\end{rmk}

\section{Model categories}\label{sec:model}
At its core, and similar to many other results in this paper, 
the proof of \cref{prop:top_bm}\ref{prop:top_bm1} is a strategic sequence of lifts.  We first outline those lifts in a preliminary lemma.

Let $\cocyl(Z)$ be a good cocylinder for $Z$ and $Z\xto{\mathfrak{i}}\cocyl(Z)\xto{\ev_0\times \ev_1}Z\times Z$ be the diagonal. 

\begin{lem}\label{prop:stable_cgap_nice}
Let $i\colon A\to B$ and $\theta\colon X\to Z$.  If 
\begin{itemize}
    \item $i_{0,A} \resizebox{.25cm}{!}{\dsquare} (\cocyl(Z)\xto{\ev_0\times \ev_1}Z\times Z)$, 
    \item $\iota \resizebox{.25cm}{!}{\dsquare} (\cocyl(Z)\xto{\ev_0}Z)$, and 
    \item $i\resizebox{.25cm}{!}{\dsquare} (X\times_{\theta, \ev_0}\cocyl(Z)\xto{\ev_1}Z)$,
\end{itemize}
then $\wor{i}{\theta}$.
\end{lem}

The third of these hypotheses is the most interesting since it describes compatibility between $i$ and $\theta$ in a way that generalizes the classical dimension and connectivity compatibility for spaces. 
\begin{proof}

Let $f$, $h$ and $H$ be as in the diagram in \eqref{fig:intro_help_again}.
\begin{equation}
          \xymatrix{\lefts\ar[dd]^{\lma}\ar[rr]^-{i_0}&&\cyl(\lefts)\ar[dd]|\hole\ar[dl]_-H&&\lefts\ar[dd]^{\lma}\ar[ll]_{i_1}\ar[dl]_{h}
\\
&Z&&\tart\ar[ll] _(.35)\theta
\\
\leftb\ar[ur]^f\ar[rr]^-{i_0}&&\cyl(\leftb)\ar@{.>}[ul]_{K}&&\leftb\ar[ll]_{i_1}\ar@{.>}[ul]_{{g}}}
\label{fig:intro_help_again}
\end{equation}
Using these maps  \eqref{eq:cyl_to_cocyl} commutes.   If ${\sigma}\in \slt{}{}{}{}{}{\eqref{eq:cyl_to_cocyl}}$, the diagram in \eqref{eq:cocyl_option} commutes.
\begin{equation*}
\noindent\begin{subequations}
    \begin{minipage}{.53\linewidth}
\begin{equation}\label{eq:cyl_to_cocyl}
\xymatrix{A\ar[d]^{i_0}\ar[r]^-{f\circ i} &Z\ar[r]^-{\mathfrak{i}}&\cocyl(Z)\ar[d]^{\ev_0\times\ev_1}
\\
\cyl(A)\ar@{.>}[urr]^{\sigma}\ar[rr]^-{(f\circ i\circ \pi, H)}&&Z\times Z}
\end{equation}
\end{minipage}
\begin{minipage}{.45\linewidth}
\begin{equation}\label{eq:cocyl_option}
\xymatrix@C=45pt{A\ar[r] ^-{h\times (\sigma\circ i_1)}
\ar[d]^i&X\times_{\theta,\ev_1}\cocyl(Z)\ar[d]
^{\ev_0\circ \mathrm{proj}_2}
\\
B\ar[r]^-f\ar@{.>}[ur]^{g\times \tau}&Z
}
\end{equation}
    \end{minipage}
\end{subequations}
\end{equation*}
    If 
$g\times \tau\in  \slt{}{}{}{}{}{\eqref{eq:cocyl_option}}$, the diagram in \eqref{eq:pushout_n_to_y} defines a map $N(i)\xto{\upsilon} \cocyl(Z)$ so that the diagram in  \eqref{eq:cocyl_to_cyl} commutes. 
\begin{equation*}
\begin{subequations}
\begin{minipage}{.60\linewidth}
\begin{equation}\label{eq:pushout_n_to_y}
    \xymatrix{A\amalg A\ar[r]^-{i_0\amalg i_1}\ar[d]^{i\amalg i}&\cyl(A)\ar[d]\ar@/^/[ddr]^{\sigma}
    \\
    B\amalg B\ar[r]\ar@/_/[drr]_-{(\mathfrak{i}\circ f)\amalg \tau}&N(i)\ar@{.>}[rd]^\upsilon
    \\
    &&\cocyl(Z)}
\end{equation}
    \end{minipage}
    \begin{minipage}{.38\linewidth}
\begin{equation}\label{eq:cocyl_to_cyl}
\xymatrix@C=5pt{N(i)\ar[d]^\iota\ar[rrr]^-\upsilon&&&\cocyl(Z)\ar[d]^{\ev_0}
\\
\cyl(B)\ar[rr]^-\pi\ar@{.>}[urrr]^\phi&&B\ar[r]^f&Z}
\end{equation} 
\end{minipage}
\end{subequations}
\end{equation*}
If $\phi\in \slt{}{}{}{}{}{\eqref{eq:cocyl_to_cyl}}$, then $K=\ev_1\circ \phi\colon \cyl(B)\to Z$ and  $g\colon B\to X$ make the diagram in \eqref{fig:intro_help_again} commute. 
\end{proof}

\begin{lem}\label{prop:stable_cgap_1}
If $A$ is cofibrant and $Z$ is fibrant, $i_{0,A} \resizebox{.25cm}{!}{\dsquare} (\cocyl(Z)\xto{\ev_0\times \ev_1}Z\times Z)$ (and  $\slt{}{}{}{}{}{\eqref{eq:cyl_to_cocyl}}$ is nonempty).  
If $\iota$ is a cofibration and $Z$ is fibrant,  $\iota \resizebox{.25cm}{!}{\dsquare} (\cocyl(Z)\xto{\ev_0}Z)$ (and  $\slt{}{}{}{}{}{\eqref{eq:cocyl_to_cyl}}$ is nonempty).
     
\end{lem}

\begin{proof}
    For the first statement, if $A$ is cofibrant, $A\xto{i_0} \cyl(A)$ is an acyclic cofibration.  If $Z$ is fibrant, $\cocyl(Z)\xto{\ev_0\times ev_1}Z\times Z$ is a fibration 
    
    For the second statement, 
    if $Z$ is fibrant $\cocyl(Z)\xto{\ev_0} Z$ is an acyclic  fibration.
\end{proof}

\begin{proof}[Proof of \cref{prop:top_bm}]
\cref{prop:stable_cgap_1}\label{proof:top_bm} translates the hypotheses of this statement to that of \cref{prop:stable_cgap_nice}.  That result completes the proof.
\end{proof}

In the spirit of \cref{prop:stable_cgap_1} we can also give assumptions that imply   $i\resizebox{.25cm}{!}{\dsquare} (X\times_{\theta, \ev_0}\cocyl(Z)\xto{\ev_1}Z)$.
\begin{lem}\label{prop:stable_cgap_2}
Suppose the model category is right proper.
If $i\colon A\to B$ is a cofibration
and $\theta\colon X\to Z$ is a weak equivalence between fibrant objects, then  $i\resizebox{.25cm}{!}{\dsquare} (X\times_{\theta, \ev_0}\cocyl(Z)\xto{\ev_1}Z)$ (and $\slt{}{}{}{}{}{\eqref{eq:cocyl_option}}$ is nonempty).
\end{lem}

\begin{proof}
 We have the two pullback diagrams in \eqref{eq:smaller_pullback} and \eqref{eq:larger_pullback}. 

\noindent\begin{subequations}
\begin{minipage}{.47\linewidth}
     \begin{equation}
         \xymatrix{X\times_{\theta,\ev_0}\cocyl(Z)\ar[r]^-{\mathrm{proj}_2} \ar[d]&\cocyl(Z)\ar[d]^{\ev_0}
\\
X\ar[r]^-\theta&Z}
    \label{eq:smaller_pullback}
    \end{equation}
    \end{minipage}
    \begin{minipage}{.51\linewidth}
            \begin{equation}
\xymatrix{X\times_{\theta,\ev_0}\cocyl(Z)\ar[r]\ar[d]^{\id\times\ev_1}&\cocyl(Z)\ar[d]^{\ev_0\times \ev_1}
\\
X\times Z\ar[r]^-{\theta\times \id}&Z\times Z
}
\label{eq:larger_pullback}
\end{equation}
\end{minipage}
\end{subequations}

\noindent The composites in the diagrams in  \eqref{eq:acyclic_fibration_map} and \eqref{eq:acyclic_fibration_map_2} are the same map.
\\
\noindent \begin{subequations}
    \begin{equation}\label{eq:acyclic_fibration_map}
X\times_{\theta,\ev_0} \cocyl(Z)\xto{\mathrm{proj}_2}\cocyl (Z)\xrightarrow{\ev_1} Z
\end{equation}
\begin{equation}\label{eq:acyclic_fibration_map_2}
X\times_{\theta,\ev_0}\cocyl(Z)\xto{\id\times\ev_1}  X\times Z\xto{\mathrm{proj}_2} Z\end{equation}
\end{subequations}

If $Z$ is fibrant, the maps $\ev_0,\ev_1\colon \cocyl(Z)\to Z$ are acyclic fibrations.  The model category is right proper and $\theta$ is an weak equivalence,  
so the top horizontal  map 
in the diagram in \eqref{eq:smaller_pullback} is a weak equivalence.  Composing with $\ev_1$
implies 
\eqref{eq:acyclic_fibration_map}
is a weak equivalence.

The first map in \eqref{eq:acyclic_fibration_map_2} is the left vertical map in the diagram in  \eqref{eq:larger_pullback}.  It is the pullback of a fibration and so is a fibration. The second map in \eqref{eq:acyclic_fibration_map_2} is the projection.   It is a fibration since $X$ is fibrant.
Therefore \eqref{eq:acyclic_fibration_map_2} is a fibration.  

Since 
\eqref{eq:acyclic_fibration_map} and  \eqref{eq:acyclic_fibration_map_2}  are the same map it is an acylic fibration.
\end{proof}

\begin{rmk}
The diagram in \eqref{eq:cocyl_option} is the right homotopy version of the diagram in  \eqref{fig:intro_help} and the results of this paper could be reworked to prioritize the diagram in  \eqref{eq:cocyl_option} over the diagram in  \eqref{fig:intro_help}.  
\end{rmk}

\bibliographystyle{annote2}
\bibliography{Refs}%

\end{document}